\documentclass[10pt,a4paper,twoside,reqno,tbtags]{amsart}

\usepackage{amsfonts,amssymb,amscd,amsmath,enumerate,verbatim}
\usepackage{booktabs, url}

\newtheorem{theorem}{Theorem}[section]

\newtheorem{proposition}[theorem]{Proposition}
\newtheorem{corollary}[theorem]{Corollary}

\theoremstyle{definition}

\newtheorem{example}[theorem]{Example}
\newtheorem{heuristic}[theorem]{Heuristic}

\title{A Ceiling Continued Fraction Approach to the Erd\H{o}s-Straus Conjecture: Heuristic finiteness of counterexamples}
\author{Andr\'{e}s Ventas}

\begin{document}

\thispagestyle{empty}

\begin{abstract}
We introduce the Ceiling Continued Fractions (FCT) framework for constructing three-term Egyptian fraction representations in the Erdős–Straus conjecture. The approach exploits divisor structures of shifted integers $p+i$ rather than congruence-based techniques. We derive a super-polynomial upper bound on the failure probability; its convergence, together with the Borel–Cantelli lemma, provides heuristic evidence that counterexamples, if any exist, form a finite set. Computational tests on $10^9$ primes in ranges around $10^{17}$, $10^{52}$, and $10^{131}$, show no counterexamples with very small search depth.
\end{abstract}

\smallskip

\vspace{3em} % Espazo antes das keywords
\keywords{ Erd\H{o}s-Straus conjecture, Egyptian fractions, Continued Fractions,  Borel–Cantelli lemma, Vaughan's bounds}

\vspace{1em}
\subjclass[2020]{11D68, 11Y16}
\maketitle

% --- 3. TEXT ---
\section{Introduction}
The Erd\H{o}s-Straus conjecture asserts that for every integer $n \ge 2$, the equation $4/n = 1/x + 1/y + 1/z$ admits a solution in positive integers $x, y, z$. While the conjecture has been verified for vast ranges, the most resistant cases remain those where $n$ is a prime $p$ satisfying specific quadratic residues. In this work, we focus on Mordell-type primes, $p \equiv \{1, 11^2, 13^2, 17^2, 19^2, 23^2\} \pmod{840}$ \cite{Oeis}.

Although the conjecture has been computationally verified for $n \leq 10^{17}$, a formal proof remains elusive \cite{elsholtz2013}. Historical approaches by Mordell and Vaughan \cite{vaughan1970} focused on identifying specific residues of $n$ for which solutions are guaranteed. However, these methods overlook the internal divisor structure of external numbers $p+i$. This paper presents the FCT framework, which shifts the paradigm from a congruence-based search to a targeted identification of relationships between the prime $p$ and the divisor sets of associated numbers $p + i$ in the orbit of $p$. 

The index $i$ of these associated numbers is what we define as the \textit{source}. The orbit is the search limit relative to $p$ beyond which the theory indicates that FCT-type solutions cannot exist.

Under the FCT framework, the solution for primes of the form $4k+3$ is immediate and yields a two-term result. This property provides a compelling heuristic: if a two-term representation is guaranteed for any prime of the form $4k+3$, should we not expect a three-term representation for any prime $p$ of the form $4k+1$ whenever a related external number has a $4k+3$ divisor? Could the problem then be reduced to identifying that $4k+3$ divisor and its associated number? Once a suitable $4k+3$ divisor of these related numbers is identified, the FCT algorithm automatically yields the full three-term solution for $p$, eliminating the need for further exhaustive searches.

By computing the success probability associated with the model, we derive a bound for the expected value of failure exhibiting super-polynomial decay,

$$E(F_p) \ll  N \exp\big(-\tfrac{1}{12}(\ln p)^2\big),$$

where $N$ is the sample size and $p$ the search range.
This decay suggests that failures become extremely rare as p grows.
Furthermore, this behavior is consistent with the expectation that only finitely many counterexamples may exist.

\section{The FCT Framework}
A Ceiling Continued Fraction (FCT) is obtained by applying the Euclidean algorithm using the ceiling function. The coefficients $\lceil c_0, c_1, c_2, \dots \rceil$ generate convergents through the negative recurrence:
\begin{equation}
  p_i = c_i p_{i-1} - p_{i-2}, \quad q_i = c_i q_{i-1} - q_{i-2}
\end{equation}
where $(p_{-1}, q_{-1}) = (1, 0)$.

\begin{theorem}[Sum by Pairs] \label{sp}
For any $x =  \lceil c_0, c_1, c_2, \dots  \rceil$ with convergent numerators $p_0, p_1, p_2, \dots$, the reciprocal is given by:
\begin{equation}
  \frac{1}{x} = \frac{1}{p_0} + \frac{1}{p_0p_1} + \frac{1}{p_1p_2} + \dots
\end{equation}
\end{theorem}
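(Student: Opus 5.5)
The plan is to prove the identity first for the finite convergents, by a telescoping argument based on a determinant identity for the negative recurrence, and then pass to the limit. I take the convention $x = c_0 - \cfrac{1}{c_1 - \cfrac{1}{c_2 - \cdots}}$, with $c_i \ge 2$ for $i \ge 1$ as produced by the ceiling Euclidean algorithm. I also take $(p_{-2},q_{-2}) = (0,-1)$, so that $p_0 = c_0$ and $q_0 = 1$.

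\textbf{Step 1 (determinant identity).} Set $D_i = p_i q_{i-1} - p_{i-1} q_i$. Substituting $p_i = c_i p_{i-1} - p_{i-2}$ and $q_i = c_i q_{i-1} - q_{i-2}$, the terms in $c_i$ cancel, which gives $D_i = D_{i-1}$. Hence $D_i = D_0 = p_0 q_{-1} - p_{-1} q_0 = -1$ for every $i \ge 0$. Dividing by $p_i p_{i-1}$ yields
\[
\frac{q_i}{p_i} - \frac{q_{i-1}}{p_{i-1}} = \frac{1}{p_{i-1}p_i}.
\]
This is the key point: the sign in the negative recurrence makes every difference positive. That is why the expansion has only $+$ signs, unlike the alternating sum in the classical case.

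\textbf{Step 2 (telescoping).} Summing the identity of Step 1 from $i=1$ to $n$ and using $q_0/p_0 = 1/c_0 = 1/p_0$ gives
\[
\frac{q_n}{p_n} = \frac{1}{p_0} + \sum_{i=1}^{n} \frac{1}{p_{i-1}p_i}.
\]
In parallel, a routine induction (the same one as for ordinary continued fractions) shows that $p_n/q_n = \lceil c_0,\dots,c_n\rceil$. For rational $x$ the algorithm terminates, so $q_n/p_n = 1/x$ and the theorem is the finite identity above.

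\textbf{Step 3 (infinite case).} This is the main obstacle. I need $p_n \to \infty$ and $q_n/p_n \to 1/x$.

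For growth, $c_i \ge 2$ gives $p_i - p_{i-1} \ge p_{i-1} - p_{i-2}$. Starting from $p_0 - p_{-1} = c_0 - 1 \ge 1$ (assuming $x>1$, otherwise shift $x$ first), the $p_i$ grow at least linearly. Therefore $\sum 1/(p_{i-1}p_i)$ converges.

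To identify the limit with $1/x$, I will write $x = \lceil c_0,\dots,c_n, x_{n+1}\rceil$ with complete quotient $x_{n+1} > 1$. Then
\[
x = \frac{x_{n+1}p_n - p_{n-1}}{x_{n+1}q_n - q_{n-1}},
\]
and using $D_n = -1$,
\[
\frac{p_n}{q_n} - x = \frac{\pm 1}{q_n\,(x_{n+1}q_n - q_{n-1})}.
\]
I must show this tends to $0$ (equivalently, that the tail sum vanishes). The delicate point is that the ceiling algorithm cannot end in an infinite string of $2$'s, since that string represents an integer boundary value. I will use this fact to guarantee that $q_n$, and the gap $x_{n+1}q_n - q_{n-1}$, are unbounded, so the error goes to zero and the series sums to $1/x$.
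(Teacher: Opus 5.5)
Your proof is correct, and it takes a genuinely different route from the paper's. The paper starts from Euler's continued fraction for the series $a_0+a_0a_1+a_0a_1a_2+\cdots$ (quoted from Khrushchev). It applies an equivalence transformation to make all partial numerators $1$, reads the result as a ceiling continued fraction of the reciprocal, solves for the $a_i$ in terms of the $c_i$, identifies $a_i=p_{i-2}/p_i$, and lets the products $a_0a_1\cdots a_k=1/(p_{k-1}p_k)$ telescope. You work only with the recurrence: the invariant $p_iq_{i-1}-p_{i-1}q_i=-1$ turns each term $1/(p_{i-1}p_i)$ into a difference of consecutive reciprocal convergents.

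The paper's route shows where the identity comes from, since the FCT expansion is Euler's series in disguise. But it is purely formal: it leans on an external identity, states $a_i=p_{i-2}/p_i$ after checking only the first cases, and never discusses convergence. Yours is self-contained and explains why every sign is positive. It proves the terminating case completely, which is the only case the paper actually uses (e.g.\ for $FCT(p,4k)$). It also gives the sharper fact that the $n$-th partial sum is exactly $q_n/p_n=1/\lceil c_0,\dots,c_n\rceil$.

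Your Step 3 is easier than you fear. Since $c_i\ge 2$ and $q_0-q_{-1}=1$, the differences $q_i-q_{i-1}$ are non-decreasing and at least $1$, so $q_n\ge n+1$. Because $x_{n+1}>1$, the gap satisfies $x_{n+1}q_n-q_{n-1}>q_n-q_{n-1}\ge 1$. Hence $0<p_n/q_n-x<1/q_n\to 0$; the sign is $+$, because ceiling convergents lie above $x$. So the partial sums $q_n/p_n$ tend to $1/x$, and convergence of the series is automatic. You need neither the growth of $p_n$ nor the absence of an infinite tail of $2$'s; the latter is true but only improves the rate.

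Likewise, ``shift $x$ first'' is not a legitimate reduction, since adding an integer to $x$ changes every $p_i$. It is also unnecessary: for $0<x\le 1$ you have $c_0=1$, the $p_i$ are still $\ge 1$, and the whole argument goes through unchanged.
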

\begin{proof}
 Given the Euler sum  $x = a_{0} + a_{0}a_{1} + a_{0}a_{1}a_{2} + \dots $  and its continued fraction \cite[p.159]{Khrushchev}
 $x = a_0 -
\frac{a_1}{1+a_1}\genfrac{}{}{0pt}{}{}{-}
\frac{a_2}{1+a_2}\genfrac{}{}{0pt}{}{}{-}
\frac{a_3}{1+a_3}\genfrac{}{}{0pt}{}{}{-}\cdots$
we transform it so that the numerators are $1$,
\begin{equation*}
\begin{aligned}
x &= \dfrac{1}{\frac{1}{a_0}}\genfrac{}{}{0pt}{}{}{-}
\frac{1}{\frac{(1+a_1)a_0}{a_1}}\genfrac{}{}{0pt}{}{}{-}
\frac{1}{\frac{(1+a_2)a_1}{a_2 a_0}}\genfrac{}{}{0pt}{}{}{-}
\frac{1}{\frac{(1+a_3)a_2 a_0}{a_3 a_1}}\genfrac{}{}{0pt}{}{}{-}\cdots \\
& \text{by definition of $FCT$}\\
\dfrac{1}{x} &= \bigg \lceil \dfrac{1}{a_0}, \dfrac{(1+a_1)a_0}{a_1}, \cdots,  \dfrac{(1+a_i)a_{i-1}a_{i-3}\cdots}{a_i a_{i-2} a_{i-4} \cdots}, \cdots \bigg \rceil.
\end{aligned}
\end{equation*}

We denote this continued fraction as $\dfrac{1}{x}= \lceil c_0, c_1, \cdots, c_i, \cdots  \rceil$, we equate the coefficients and solve for the $a_i$, 
\begin{equation*}
\begin{aligned}
& a_0 = \frac{1}{c_{0}},\  a_1 = \frac{a_0}{c_{1}-a_0}, \ a_2 = \frac{a_1}{c_{2}a_{0}-a_1}, \ a_3 = \frac{a_{2}a_{0}}{c_{3}a_{1}-a_{2}a_{0}}, \cdots \\
& a_i = \frac{a_{i-1}a_{i-3}\cdots}{c_{i}a_{i-2}a_{i-4}...-a_{i-1}a_{i-3}\cdots}. 
 \end{aligned}
\end{equation*}
Using the identity of the numerators of the convergents $p_i = c_i p_{i-1} - p_{i-2}$, we have
\begin{equation*}
\begin{aligned}
a_0 &= \dfrac{1}{c_0} = \dfrac{1}{p_0}. \\
p_1 &= c_{1}p_{0}-1, \ a_1 = \frac{a_0}{c_{1}-a_0} = \dfrac{1/c_0}{c_{1}-(1/c_{0})} = \dfrac{1}{c_{1}c_{0}-1} = 
\frac{1}{c_{1}p_{0}-1} = \dfrac{p_{-1}}{p_1}.\\
a_i &= \frac{a_{i-1}a_{i-3}...}{c_{i}a_{i-2}a_{i-4}...-a_{i-1}a_{i-3}...} =\frac{p_{i-2}}{p_{i}}.
 \end{aligned}
\end{equation*} 

Now we substitute in $x$ and use telescopic sum

  \begin{equation*}
\begin{aligned}
 x &= a_{0} + a_{0}a_{1} + a_{0}a_{1}a_{2} + a_{0}a_{1}a_{2}a_{3}  + \cdots\\
 &=\dfrac{1}{p_0} + \dfrac{1}{p_0}\dfrac{1}{p_1} 
+ \dfrac{1}{p_0}\dfrac{1}{p_1}\dfrac{p_0}{p_2}  
+ \dfrac{1}{p_0}\dfrac{1}{p_1}\dfrac{p_0}{p_2}\dfrac{p_1}{p_3} + \cdots\\
&=\frac{1}{p_0} + \frac{1}{p_0}\dfrac{1}{p_1} 
+ \dfrac{1}{p_1}\frac{1}{p_2} +  
\dfrac{1}{p_2}\dfrac{1}{p_3}+\cdots \\
&=\dfrac{1}{p_0} + \sum_{i=0}^{\infty} \dfrac{1}{p_i}\dfrac{1}{p_{i+1}}.
\end{aligned}
\end{equation*}

(Proof in \cite{ventas2025}).
\end{proof}

To solve the Erd\H{o}s-Straus conjecture for a prime $p$, we seek an integer $4k$ such that the FCT of $p/4k$ (denoted as $FCT(p, 4k)$) has exactly three terms. Thus, we obtain:
\begin{equation}
  \frac{4k}{p} = \frac{1}{p_0} + \frac{1}{p_0p_1} + \frac{1}{p_1p_2}
\end{equation}
where $p_2 = p$. 

For this type of solution, the problem reduces to finding a residue $r_0$ such that:
\begin{proposition}[Inner Congruence] \label{inner}
A three-term unit fraction solution exists if:
\begin{equation}
  4k + 1 \equiv 0 \pmod{r_0}, \quad \text{where } r_0 = c_0 \cdot 4k - p
\end{equation}
with $c_0 = \lceil p/4k \rceil$.
\end{proposition}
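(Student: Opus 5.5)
The plan is to run the ceiling Euclidean algorithm on $x = p/4k$ explicitly. Under the hypothesis $r_0 \mid 4k+1$, it should stop after exactly three coefficients. Theorem~\ref{sp} applied to $1/x = 4k/p$ would then give the three-term representation.

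Throughout I assume $p$ is an odd prime and $1 \le 4k < p$. Then $4k \nmid p$, so $c_0 = \lceil p/4k\rceil \ge 2$ and $1 \le r_0 = c_0\cdot 4k - p < 4k$.

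\emph{Step 1 (first coefficient).} By definition of $c_0$ and $r_0$,
\[
\frac{p}{4k} = c_0 - \frac{r_0}{4k}.
\]

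\emph{Step 2 (second coefficient).} Write $4k+1 = c_1 r_0$. Then
\[
\frac{4k}{r_0} = c_1 - \frac{1}{r_0},
\]
so $\lceil 4k/r_0\rceil = c_1$ and the next remainder is $c_1 r_0 - 4k = 1$.

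\emph{Step 3 (termination).} Since that remainder is $1$, the next quotient $r_0/1$ is an integer. Hence $c_2 = r_0$ and the expansion terminates:
\[
\frac{p}{4k} = \lceil c_0, c_1, c_2\rceil .
\]
With $(p_{-1},q_{-1}) = (1,0)$ and $(p_0,q_0) = (c_0,1)$, the recurrence gives
\[
p_1 = c_1c_0 - 1, \qquad p_2 = c_2p_1 - p_0 .
\]

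\emph{Step 4 (identify $p_2$ with $p$).} The negative recurrence preserves the determinant, $p_iq_{i-1} - p_{i-1}q_i = -1$ for every $i$, so each convergent $p_i/q_i$ is in lowest terms. The last convergent equals $p/4k$, and $\gcd(p,4k) = 1$ because $p$ is prime and $4k<p$. Hence $p_2 = p$ and $q_2 = 4k$.

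As a check independent of the convergent machinery, one can expand directly:
\[
r_0(c_1c_0 - 1) - c_0 = c_0(4k+1) - r_0 - c_0 = 4kc_0 - r_0 = p .
\]

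\emph{Step 5 (apply Theorem~\ref{sp}).} For the finite expansion, Theorem~\ref{sp} gives
\[
\frac{4k}{p} = \frac{1}{p_0} + \frac{1}{p_0p_1} + \frac{1}{p_1p_2}.
\]
The finiteness point can be avoided entirely by verifying this identity directly. Clearing denominators reduces it to $4k\,p_1 = p_1 + 1 + c_0$ after using $p_2 = p$. This holds because $p_1 + 1 = c_0c_1$ and $c_1 - 1 = (4k+1-r_0)/r_0$. Here $x = c_0$, $y = c_0p_1$ and $z = p_1p$ are positive integers, since $c_0 \ge 2$ and $c_1 \ge 1$ give $p_1 \ge 1$.

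\emph{Main obstacle: degenerate cases.} The delicate points are degenerate cases where the terms collapse.

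\begin{itemize}
\item If $r_0 = 1$, then $c_2 = 1$ and $p_1 = p + c_0$. The middle and last denominators are then distinct. Alternatively, one stops after two terms and splits $1/n = 1/(n+1) + 1/(n(n+1))$ to get three terms.
\item If $c_1 = 1$, i.e.\ $r_0 = 4k+1$, this contradicts $r_0 < 4k$, so it cannot occur.
\end{itemize}

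I would check these two cases explicitly so that the statement yields three genuine positive unit fractions.
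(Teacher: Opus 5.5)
Your main line is the paper's proof. The paper writes out the three rows of the ceiling Euclidean algorithm ($p,4k \mapsto c_0,r_0$; $4k,r_0 \mapsto c_1,1$; $r_0,1 \mapsto c_2=r_0,0$) and relies on Theorem~\ref{sp} for the three-term sum; that is your Steps 1--3 and 5. Your Step 4 adds the identification $p_2=p$, $q_2=4k$, which the paper leaves implicit.

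One step is wrong as written: the ``direct verification'' in Step 5. Clearing denominators in $4k/p = 1/c_0 + 1/(c_0p_1) + 1/(p_1p)$ gives $4kc_0p_1 = p(p_1+1)+c_0$. After using $p_1+1=c_0c_1$ this becomes $4kp_1 = c_1p+1$, not $4kp_1 = p_1+1+c_0$. For example, $p=73$, $4k=20$ gives $c_0=4$, $c_1=3$, $p_1=11$, and $220\neq 16$, whereas $c_1p+1=220$. The correct identity follows from $4kc_0=p+r_0$ and $c_1r_0=4k+1$.

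A cleaner repair uses your own determinant identity. From $p_iq_{i-1}-p_{i-1}q_i=-1$ you get $q_i/p_i - q_{i-1}/p_{i-1} = 1/(p_{i-1}p_i)$. This telescopes from $q_0/p_0=1/c_0$ to the three-term formula for \emph{any} coefficients. So you can set $c_1=(4k+1)/r_0$ and $c_2=r_0$ directly from the hypothesis, check $p_2=r_0p_1-c_0=p$ and $q_2=r_0c_1-1=4k$, and telescope, with no appeal to the ceiling algorithm or to finiteness in Theorem~\ref{sp}.

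This version also handles two cases your write-up treats awkwardly.
\begin{itemize}
\item When $r_0=1$, your Step 2 claim $\lceil 4k/r_0\rceil=c_1$ fails: the ceiling algorithm returns $4k$ and stops after two coefficients. The paper's table passes over this case too. The telescoping argument covers it uniformly.
\item Your assumption $4k<p$ is unnecessary. For $4k>p$ one has $c_0=1$ and $c_1=(4k+1)/(4k-p)\ge 2$, so $p_1\ge 1$ and the same representation holds. This is the regime the paper itself uses in its orbit and grid discussions.
\end{itemize}
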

\begin{proof}
Using the ceiling Euclidean algorithm:
\begin{center}
\begin{tabular}{ c|c|c|c } 
 $p$ & $4k$ & $c_0$ & $r_0$ \\ 
 $4k$ & $r_0$ & $c_1$ & $1$ \\ 
 $r_0$ & $1$ & $c_2=r_0$ & $0$ \\ 
\end{tabular}
\end{center}
\end{proof}

It is important to maintain the ceiling condition for $c_0$ and $c_1$. Here, $c_0$ acts as the basis that relates the FCT to the sources.

\begin{theorem}[Divisors of External Sources] \label{divisores}
For any prime $p \equiv 1 \pmod 4$, if $p+i$ has a divisor $d \equiv 3 \pmod 4$ such that $4i \mid (p+d)$, there exists a three-term direct solution given by $FCT(p, (p+d)/i)$.
\end{theorem}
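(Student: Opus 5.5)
The plan is to apply Proposition~\ref{inner} with the explicit choice
\[
4k=\frac{p+d}{i},
\]
which is an integer by the hypothesis $4i\mid(p+d)$. The congruence $d\equiv 3\pmod 4$ together with $p\equiv 1\pmod 4$ is what makes $p+d\equiv 0\pmod 4$ possible at all. Along the way I will also need to see where the ceiling condition enters. Three things must be checked, in this order:
\begin{enumerate}
\item the first ceiling quotient is $c_0=i$;
\item the first remainder is $r_0=d$;
\item the inner congruence $4k+1\equiv 0\pmod{r_0}$ holds.
\end{enumerate}

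\textbf{Step 1: the ceiling quotient.} We have $p/4k = pi/(p+d)$, which is strictly less than $i$ because $d>0$. So $c_0=\lceil pi/(p+d)\rceil=i$ exactly when $pi/(p+d)>i-1$. This is equivalent to $p+d>id$, i.e.\ $p>(i-1)d$. I expect this step to be the main obstacle. The statement does not list this inequality, so it must be read as the implicit ``orbit'' restriction on the source $i$ mentioned in the introduction. I would first try to derive it from the hypotheses: $d\mid p+i$ and $4i\mid p+d$ give $d\le p+i$ and $i\le (p+d)/4$, but these alone do not force $(i-1)d<p$. So I would state it explicitly as the range of validity, namely small sources $i$ relative to $p/d$. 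This is exactly the paper's warning that ``it is important to maintain the ceiling condition for $c_0$.''

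\textbf{Steps 2 and 3: the remainder and the congruence.} With $c_0=i$ we get
\[
r_0=c_0\cdot 4k-p=(p+d)-p=d .
\]
For the inner congruence, reduce $i\cdot 4k=p+d$ modulo $d$. Since $d\mid p+i$, this gives $i\cdot 4k\equiv p\equiv -i\pmod d$, hence $i(4k+1)\equiv 0\pmod d$.

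To cancel $i$ we need $\gcd(i,d)=1$. Any prime $q$ dividing both $i$ and $d$ divides $p+i$, hence divides $p$, so $q=p$. That would force $p\mid i$, contradicting $(i-1)d<p$ from Step~1 (unless $i=1$, where $\gcd=1$ trivially). Therefore $d\mid 4k+1$.

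\textbf{Conclusion.} Proposition~\ref{inner} now applies with $r_0=d$. The second line of the ceiling algorithm has $c_1=(4k+1)/d$, which leaves remainder $1$ and is a legitimate ceiling quotient since $0<c_1d-4k=1<d$. The last line gives $c_2=d$.

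The convergent numerators are then $p_0=i$, $p_1=c_1i-1$ and $p_2=p$. Theorem~\ref{sp} therefore yields
\[
\frac{4}{p}=\frac{1}{k}\Bigl(\frac{1}{p_0}+\frac{1}{p_0p_1}+\frac{1}{p_1p_2}\Bigr),
\]
which is the three-term representation produced by $FCT(p,(p+d)/i)$. One final check is $p_1\ge 1$, which follows from $c_1\ge 1$ and $i\ge 1$ except in the degenerate case $c_1=i=1$. That case I would treat separately, or exclude by noting it forces $4k+1=d$.
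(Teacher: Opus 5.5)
Your Step~1 is where the proof breaks. You decide that $p>(i-1)d$ is not implied by the hypotheses and add it as a ``range of validity''. That proves a weaker theorem than the one stated. It also undermines Step~3: your argument for $\gcd(i,d)=1$ rules out $p\mid i$ only by appealing to this extra inequality.

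In fact the inequality follows from the hypotheses. The missing idea is to use the divisibilities arithmetically, not only as size bounds.
\begin{itemize}
\item Your own bounds give $4i\le p+d\le 2p+i$, hence $i\le 2p/3<p$. So $p\nmid i$, and your prime-$q$ argument gives $\gcd(i,d)=1$ without Step~1.
\item Write $p+i=md$. Then $p+d=(m+1)d-i$, so $i\mid (m+1)d$, and therefore $i\mid m+1$. Put $m+1=is$.
\item Then $4k=(p+d)/i=sd-1$. Since $4\mid sd-1$ and $d\equiv 3\pmod 4$, we get $s\equiv 3\pmod 4$, so $s\ge 3$.
\end{itemize}
Consequently
\[
p-(i-1)d=(is-1)d-i-(i-1)d=i\bigl((s-1)d-1\bigr)>0,
\]
which is exactly your ceiling condition $c_0=i$. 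The same computation gives $4k+1=sd$ directly. So Step~3 needs no cancellation argument, $c_1=s\ge 3$, and your leftover degenerate case $c_1=i=1$ cannot occur.

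The paper does not run the ceiling algorithm forwards. It posits the expansion $\lceil i,c_1,d\rceil$, solves the numerator and denominator recurrences for $c_1$ and $4k=(p+d)/i$, and invokes Theorem~\ref{sp}. The sum-by-pairs identity is purely algebraic; it only uses $p_nq_{n-1}-p_{n-1}q_n=-1$. Hence the representation
\[
\frac{4k}{p}=\frac{1}{i}+\frac{d}{i(p+i)}+\frac{d}{p(p+i)}
\]
never needs the ceiling inequality, and the obstacle you hit does not arise on that route. Once Step~1 is repaired as above, your forward route proves a stronger fact that the paper leaves implicit: the ceiling Euclidean algorithm applied to $p/4k$ really outputs $\lceil i,s,d\rceil$.
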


\begin{proof}
Given $p$ and its FCT coefficients $\lceil c_0=i, c_1, c_2= d \rceil$, the numerators of the convergents of the continued fraction are $p_j = \{i, i c_1 - 1, p\}$, and the denominators are $q_j = \{1, c_1 , 4k\}$. Solving for $c_1$ using the negative recurrence rules of ceiling continued fractions:
\begin{equation}
\begin{aligned}
  p+i &= k_0\cdot d. \quad \text{($d$ divides $p+i$)}\\
  p &= (ic_1 - 1)d -i. \quad \text{(recurrence of the last coefficient for numerators)} \\
  c_1 &= \bigg(\dfrac{p + i}{d}+1\bigg)/i. \\
  d \cdot c_1 - 1 &= 4k. \quad \text{(recurrence of the last coefficient for denominators)} \\
  4k &= \dfrac{p + d + i}{i} - 1 = \dfrac{p + d }{i}. \\
  k &= \dfrac{p + d }{4i}.
\end{aligned}
\end{equation}
Since $p \equiv 1 \pmod 4$ and $d \equiv 3 \pmod 4$, $p+d$ is a multiple of $4$. The condition $4i \mid (p+d)$ ensures that $4k$ is an integer, completing the Egyptian fraction representation via Theorem [\ref{sp}] with $FCT(p, 4k) = FCT(p, (p+d)/i)$.
\end{proof}

We obtain as a consequence a useful sufficient condition for $p+1$:
\begin{corollary}
For every prime $p = 4k+1$ where $p+1$ possesses at least one factor $d \equiv 3 \pmod 4$, the conjecture holds.
\end{corollary}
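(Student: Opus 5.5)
The plan is to specialize Theorem~\ref{divisores} to the source $i=1$. With $i=1$, the divisibility hypothesis $4i \mid (p+d)$ becomes $4 \mid (p+d)$. This holds automatically, because $p \equiv 1 \pmod 4$ and $d \equiv 3 \pmod 4$ give $p+d \equiv 0 \pmod 4$. So the only hypothesis left to check is that $p+1$ has a divisor $d \equiv 3 \pmod 4$, which is exactly what the corollary assumes.

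Concretely, I would take such a divisor $d$ of $p+1$ and set $k=(p+d)/4$, an integer. Theorem~\ref{divisores} then supplies the three-term expansion of $FCT(p,p+d)$. Through Theorem~\ref{sp} this reads
\[
\frac{4k}{p}=\frac{p+d}{p}=\frac{1}{p_0}+\frac{1}{p_0p_1}+\frac{1}{p_1p_2},
\]
with $p_0=c_0=1$, $c_1=(p+1)/d+1$, $p_1=c_1-1=(p+1)/d$, and $p_2=p$.

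The main obstacle is that this identity expresses $(p+d)/p = 1 + d/p$, not $4/p$. The phrase ``the conjecture holds'' therefore needs an extra step to reach the Erdős–Straus form. I would go back to Proposition~\ref{inner} and work with the pair $(p,4k)$ so that the inner congruence $4k+1\equiv 0 \pmod{r_0}$ is met. The most direct route is to solve $4/p = 1/x+1/y+1/z$ explicitly from the data $d \mid p+1$.

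I would write $p+1 = d m$. Since $p\equiv 1 \pmod 4$, the number $p+1\equiv 2 \pmod 4$, and so $m \equiv 2 \pmod 4$. I would then seek a representation of the form
\[
\frac{4}{p}=\frac{1}{x}+\frac{1}{xy'}+\ldots
\]
using the classical identity: if $p+1$ has a divisor $d\equiv 3 \pmod 4$, then $4/p$ splits as a sum of three unit fractions built from $(p+d)/4$ and $(p+1)/d$.

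To find the identity, I would take the three-term $FCT$ obtained above and rescale it, choosing the first term $x=(p+d)/4 \cdot$ (a suitable factor) so that the remainder $4/p-1/x$ becomes a sum of two unit fractions whose denominators are products of $p$, $(p+1)/d$ and $(p+d)/4$. I would verify this by clearing denominators, using $p+1=dm$ to cancel. If that verification fails, the fallback is to invoke the equivalent classical Mordell-type identity and interpret it as the $i=1$ case of Theorem~\ref{divisores}, as the statement intends.
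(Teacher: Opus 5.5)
\paragraph{Gap.} Your first two paragraphs match the paper's implicit argument. You take $i=1$ in Theorem~\ref{divisores}, note that $4\mid p+d$ holds automatically, and read off $p_0=1$, $p_1=(p+1)/d$, $p_2=p$. The gap is in what comes after. You call the passage from $(p+d)/p$ to $4/p$ ``the main obstacle'', but you never write down or verify a triple $(x,y,z)$. The plan ends with ``a suitable factor'', ``I would verify this'', and a fallback to an unnamed ``classical Mordell-type identity''. The statement is exactly the existence of positive integers with $4/p=1/x+1/y+1/z$, so as written the proposal does not prove it.

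\paragraph{The missing step.} It is trivial and needs no detour through Proposition~\ref{inner}. That proposition is satisfied anyway, since $c_0=1$, $r_0=d$ and $d\mid p+d+1$. Because $k=(p+d)/4$ is a positive integer, divide the identity you already have, $4k/p=1+1/m+1/(mp)$ with $m=(p+1)/d$, by $k$. This is what the paper means by ``scaling the denominators by $k$'' in its Example. It gives
\[
\frac{4}{p}=\frac{1}{k}+\frac{1}{km}+\frac{1}{kmp},\qquad k=\frac{p+d}{4},\quad m=\frac{p+1}{d}.
\]
The verification is one line: $\frac{1}{m}+\frac{1}{mp}=\frac{p+1}{mp}=\frac{d}{p}$, so the bracket equals $1+\frac{d}{p}=\frac{p+d}{p}=\frac{4k}{p}$. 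For example, $p=13$, $d=7$ gives $4/13=1/5+1/10+1/130$. Your guess $x=(p+d)/4$ with ``suitable factor'' $1$ was right; you only needed to carry it out. The observation $m\equiv 2\pmod 4$ is true but plays no role.
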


\subsection{Orbits and Sources} \label{orbitas}
The \textbf{orbit} of $p$, denoted as $O(p)$, is the distance limit for associated numbers $p+i$ beyond which FCT-type solutions are no longer feasible. 

From the Euclidean ceiling algorithm, if $4k > p$, then $c_0 = 1$ and $r_0 = 4k - p$. Since the subsequent step requires $r_0$ to divide $4k+1$, the maximum $4k$ is bounded near $2p$. Beyond this point, $r_0 \approx 2p+d-p = p+d$, which cannot divide $2p+1$ for $d>1$. Thus, we define the orbit as $O(p) = 2p$.

The \textbf{sources} are the indices $i$ (or associated numbers $p+i$) where we search for divisors within $O(p)$. These sources are determined by $c_0$, which varies in steps of $\lceil p/4k \rceil$. The number of distinct values of $i$ in this range is given by $M(p) = 2\sqrt{p/4} = \sqrt{p}$.

Sources are clustered at one end of the range and sparsely distributed at the other. 

For example, let $p=73$; the set of indices $i = \lceil 73/4k \rceil$ yields the sequence:
$$\{19, 10, 7, 5, 4, 4, 3, 3, 3, \dots, 2, 2, 2 \dots, 1, 1, 1,\dots \}.$$
The set of distinct values is $\{ 19, 10, 7, 5, 4, 3, 2, 1\}$, resulting in $M(p)=8 \approx \sqrt{73}$ unique sources.

\subsection{Computational Acceleration}
While the following properties generates solutions that intersect with the source analysis above, it provides a computationally trivial sieve that resolves the vast majority of cases in $O(\ln p)$ time, thereby explaining the empirical speed of the algorithm.

\begin{theorem}[Grid of Congruences] \label{grid}
For any pair $c_1= 4k_1+3$, $c_2= 4k_2+3$, the product minus one, $m = c_1c_2-1$, forms an infinite system of congruences $p \equiv -c_1 \pmod{m}$ and $p \equiv -c_2 \pmod{m}$ yielding solutions for $p=4k+1$.
\end{theorem}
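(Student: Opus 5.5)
The plan is to read the statement backwards through the three-term ceiling continued fraction of Proposition~\ref{inner}. We prescribe the last two coefficients of $FCT(p,4k)$ as $c_1, c_2$ and leave $c_0$ free. We then compute which $p$ and $4k$ this forces, and finish with Theorem~\ref{sp}.

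\textbf{Step 1 (recurrences).} Take a three-term expansion $\lceil c_0, c_1, c_2\rceil$. The numerator recurrence gives $p_0=c_0$, $p_1=c_1c_0-1$ and
\[
p_2=c_2p_1-p_0=c_0(c_1c_2-1)-c_2=c_0m-c_2 .
\]
The denominator recurrence gives $q_0=1$, $q_1=c_1$ and $q_2=c_2c_1-1=m$. So the fraction $p/m$ with $p=c_0m-c_2$ has exactly these coefficients, and we set $4k=m$. Letting $c_0$ run over all positive integers gives the infinite family $p\equiv -c_2 \pmod m$. Exchanging the roles of $c_1$ and $c_2$ (the expansion $\lceil c_0,c_2,c_1\rceil$) leaves $m$ unchanged and gives the other family $p\equiv -c_1\pmod m$.

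\textbf{Step 2 (arithmetic conditions).} Since $c_1\equiv c_2\equiv 3\pmod 4$, we have $m\equiv 9-1\equiv 0\pmod 4$. Hence $4k=m$ with $k$ an integer, and $p=c_0m-c_2\equiv -3\equiv 1\pmod 4$. So every member of either family is automatically of the form $4k+1$.

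\textbf{Step 3 (ceiling conditions).} This is the step I expect to be the only real obstacle: we must check that the ceiling Euclidean algorithm applied to $(p,m)$ actually returns $c_0,c_1,c_2$, as the remark after Proposition~\ref{inner} demands.
\begin{itemize}
\item Since $c_1\ge 3$, we have $0<c_2<m$. Hence $\lceil p/m\rceil=c_0$ with remainder $r_0=c_0m-p=c_2$.
\item Next, $m=c_1c_2-1$ gives $\lceil m/c_2\rceil=c_1$ with remainder $1$.
\item Finally, $c_2/1$ terminates with coefficient $c_2$.
\end{itemize}
This reproduces the table in the proof of Proposition~\ref{inner}. The inner congruence holds there because $r_0=c_2$ divides $m+1=c_1c_2$. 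Positivity of $p_1=c_0c_1-1$ is clear.

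\textbf{Step 4 (conclusion).} By Theorem~\ref{sp} (applied to $p/m$),
\[
\frac{4k}{p}=\frac1{c_0}+\frac1{c_0p_1}+\frac1{p_1p}.
\]
Dividing by $k$ gives
\[
\frac4p=\frac1{kc_0}+\frac1{kc_0p_1}+\frac1{kp_1p},
\]
which is a three-term Egyptian representation, valid for every $p$ in either residue class modulo $m$.
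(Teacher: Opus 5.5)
Your proof is correct and follows the paper's argument. Both compute $q_2=c_1c_2-1=m$ and $p_2=(c_1c_0-1)c_2-c_0=c_0m-c_2$ from the negative recurrences, read off $p\equiv -c_2 \pmod m$, and swap $c_1$ and $c_2$ for the second class. Your Steps 2 and 3 (that $4\mid m$, that $p\equiv 1 \pmod 4$, and that $0<c_2<m$ forces the ceiling algorithm to return exactly $\lceil c_0,c_1,c_2\rceil$) fill in details the paper leaves implicit, and you verify them correctly.
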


\begin{proof}
Using the denominators of the convergents, $c_1c_2-1=4k$, so $c_1c_2 = 4k+1$. From the numerators:
\begin{equation}
\begin{aligned}
    p &= (c_1c_0-1)c_2 - c_0 
      = (c_1c_2-1)c_0 - c_2. \\
    p &\equiv -c_2 \pmod{c_1c_2-1}.
\end{aligned}
\end{equation}
The interchanging of $c_1$ and $c_2$ yields the second congruence.
\end{proof}

The structure mirrors that of the primes: each new 
progression, associated with a larger $4k+3$ factor 
of a solution, covers additional elements without 
achieving full coverage.

Setting $c_0=1$ and $c_1c_2=4k+1$ generates two arithmetic progressions: $(4k-c_1) + 4k$ and $(4k-c_2) + 4k$, forming an extensive grid:

$-3 \pmod{8, 20, 32, 44, \ldots}$

$-7 \pmod{20, 48, 76, 104, 132, \ldots}$

$-11 \pmod{32, 76, 120, 164, 208, 252, \ldots}$

$\cdots$

The FCT framework provides a direct three-step solution once $4k$ is identified (Theorem [\ref{sp}]).

We also have another significant sufficient condition for solutions,
\begin{theorem}[Factors of External Sources] \label{factores}
For any prime $p \equiv 1 \pmod 4$, if $p+4k_i+3$ has a factor $f \equiv 2 \pmod{3k_i}$, a three-term direct solution exists with $FCT(p, 4f)$.
\end{theorem}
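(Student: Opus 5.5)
The plan is to run the argument of Theorem~\ref{grid} and Theorem~\ref{divisores} in reverse. I prescribe the last coefficient of the ceiling continued fraction to be the $4k+3$ number $c_2 = d := 4k_i+3$, and I read the factor $f$ of the source $p+d$ as the quantity that fixes the second denominator, $q_2 = 4f$. With $q_0=1$, $q_1=c_1$ and $q_2=c_2c_1-1$, requiring $q_2=4f$ forces
\[
c_1=\frac{4f+1}{d}.
\]
From the numerators $p_0=c_0$, $p_1=c_0c_1-1$ and $p_2=c_2p_1-c_0=p$, exactly as in the proof of Theorem~\ref{grid}, I get $p=(c_1c_2-1)c_0-c_2$, that is,
\[
p+d = 4f\,c_0 .
\]
So the whole proof reduces to three integrality and admissibility checks: $4f \mid p+d$, $d \mid 4f+1$, and the ceiling conditions on $c_0$ and $c_1$.

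The first check comes from the hypothesis that $f$ is a factor of the source $p+4k_i+3$. Since $p\equiv 1$ and $d\equiv 3 \pmod 4$, we have $4\mid p+d$, just as in Theorem~\ref{divisores}. I would therefore take $f$ to be a divisor of $(p+d)/4$, and then $c_0=(p+d)/(4f)$ is a positive integer.

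The second check, $d\mid 4f+1$, is where the congruence hypothesis on $f$ has to enter. Since $4(k_i+1)\equiv 1 \pmod{4k_i+3}$, we have $4^{-1}\equiv k_i+1$ modulo $d$. Hence the condition is equivalent to
\[
f\equiv -(k_i+1)\equiv 3k_i+2 \pmod{4k_i+3}.
\]
This matches the shape ``$f\equiv 2$ modulo something involving $3k_i$'' in the statement. I would make this translation explicit and check it on small cases, for instance $k_i=0$, where $d=3$ and $f\equiv 2 \pmod 3$. I expect this step to be the main obstacle. The modulus as literally written ($3k_i$) does not obviously coincide with $4k_i+3$, so I would first try to show that the intended reading is $f\equiv 3k_i+2 \pmod{4k_i+3}$. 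If the literal reading is intended instead, I would look for an extra relation between $f$ and $k_i$ (coming from $f\mid p+d$) that upgrades it to divisibility by $d$.

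Finally, I would verify that the triple $\lceil c_0,c_1,d\rceil$ is a genuine ceiling expansion of $p/4f$, as required in Proposition~\ref{inner}. Concretely, $c_0=\lceil p/4f\rceil$ holds because $0<c_0\cdot 4f-p=d<4f$, which needs $f>(4k_i+3)/4$; this should follow from $f\equiv 3k_i+2$ with $f$ positive. Next, $r_0=d$ divides $4f+1$, which is exactly the inner congruence of Proposition~\ref{inner}. Theorem~\ref{sp} then gives
\[
\frac{4f}{p}=\frac1{c_0}+\frac1{c_0p_1}+\frac1{p_1p},
\]
which is the claimed three-term solution $FCT(p,4f)$.
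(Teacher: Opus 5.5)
Your route is the paper's route. The paper's proof also fixes $c_2=4k_i+3$ and $q_2=4f$, reads off $p=(c_2c_1-1)c_0-c_2=4fc_0-(4k_i+3)$, i.e.\ $c_0=(p+4k_i+3)/(4f)$, and stops there. Your version is more complete. The paper never checks that $c_1=(4f+1)/(4k_i+3)$ is an integer, nor that the ceiling conditions hold, and that integrality is exactly where a congruence on $f$ has to enter. Your translation $(4k_i+3)\mid 4f+1 \iff f\equiv 3k_i+2\pmod{4k_i+3}$ is correct. So is your ceiling check: positivity forces $f\ge 3k_i+2$, hence $4f>4k_i+3$; and $4f=c_1d-1$ gives $c_1=\lceil 4f/d\rceil$ with remainder $1$.

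Two points should be settled rather than left open.

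\emph{First, drop the fallback branch.} Under the literal hypothesis $f\equiv 2\pmod{3k_i}$ the statement is false, so no extra relation coming from $f\mid p+d$ can upgrade it. Take $p=17$, $k_i=1$ (so $d=7$) and $f=2$. Then $f\equiv 2\pmod 3$, and even $4f=8\mid 24=p+7$. Yet $r_0=3\cdot 8-17=7\nmid 9$, and indeed $FCT(17,8)=\lceil 3,2,2,2,2,2,2,2\rceil$ has eight terms. The theorem has to be stated with $f\equiv 3k_i+2\pmod{4k_i+3}$, equivalently $(4k_i+3)\mid 4f+1$.

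\emph{Second, the ``therefore'' in your first check is a non sequitur.} $4\mid p+d$ and $f\mid p+d$ give $4f\mid p+d$ only when $f$ is odd. Take $p=17$, $k_i=0$, $f=2$: we have $f\mid 20$ and $f\equiv 2\pmod 3$, but $8\nmid 20$, and again $FCT(17,8)$ is not three-term. So $4f\mid p+4k_i+3$ must be an explicit hypothesis; the paper's formula for $c_0$ tacitly assumes it as well.

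With these two amendments your argument is a complete proof of the corrected statement.
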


\begin{proof}
Given $p$ and its FCT coefficients $\lceil c_0, c_1, c_2= 4k_i+3 \rceil$, the numerators of the convergents are $p_i = \{c_0, c_0c_1 - 1, p\}$, and the denominators are $q_i = \{1, c_1 , 4f\}$, with $f=(p+4k_i+3)/(3k_i+2)$. 
Using the recurrences of the convergents:
\begin{equation}
\begin{aligned}
    p &= c_2 c_1 c_0 -c_2 - c_0 \\
      &= (c_2c_1 -1)c_0 - c_2 \\
      &= 4fc_0 - (4k_i+3) \\
    p+4k_i+3 &= 4fc_0 \\
    c_0 &= \dfrac{p+4k_i+3}{4f}
\end{aligned}
\end{equation}
fulfilling the condition of the theorem. We obtain the solution by $FCT(p, 4f)$ through Theorem [\ref{sp}].
\end{proof}

\section{Expected Value of failure for Mordell-type Primes}

This study focuses exclusively on primes of Mordell type 

$p \equiv \{1, 11^2, 13^2, 17^2, 19^2, 23^2\} \pmod{840}$ \cite{Oeis}. 

\subsection{Independence of the sources} \label{independencia}
As demonstrated in the \textit{Divisor Sources} Theorem [\ref{divisores}], finding a solution requires identifying a divisor $d \equiv 3 \pmod 4$ within an appropriate source $i$ by examining the divisors of $p+i$. 

Since the space of potential successes is restricted to divisors within $p+i$, we have as an average in each source approximately $\ln p$ opportunities to identify a favorable divisor. 

To estimate the probability of potential failures, we adopt the standard heuristic of Cramér's model  \cite{cramer1936}, treating the divisibility of shifted primes $p+i$ by distinct integers as asymptotically independent events.

Note that restricting the analysis to Mordell-type primes the correlation in the divisors of $p+i$ are expected to behave more closely to the independent random model.

The non-intersection between solutions from different sources is ensured as they provide solutions with different $c_0$ coefficients. 

\subsection{Divisors of External Sources (fd: f0, f1, f2)} \label{divExt}

Section \ref{orbitas} established that the number of distinct available sources in the orbit of $p$ is $M(p)=\sqrt{p}$. There are three main categories of divisor sources, each with complementary characteristics. 

The probability that a prime $p$ fails to find a solution across its $\sqrt{p}$ independent sources is the joint probability of failure across these three types. 

Therefore, failure is defined as the inability to find a suitable divisor among the $\ln{p}$ candidates across all $\sqrt{p}$ independent sources.

\subsubsection{Source 1 (f1)}
For source $f_1$ to yield a solution, it must contain at least one divisor $d \equiv 3 \pmod 4$. The probability that a random integer lacks prime factors of this form is governed by the Landau-Ramanujan constant $K \approx 0.764$ \cite{moree1999}. 

Because source $f_1$ is computed over all $4k+3$ residues, its failure probability is fundamentally limited by the Landau-Ramanujan bound:
\begin{equation}
    P_{f1}(F) \sim \frac{K}{\sqrt{\ln p}}.
\end{equation}
In this case, the expectation does not depend on $M$ (the number of sources) as $f1$ is a single, fixed source.

\subsubsection{Consecutive Sources $\mathbf{i \ge 2}$ (f2)}
The source $c_0=i$ corresponds to solutions derived from the divisors of $p+i$, obtained with consecutive $i$. 

By Theorem [\ref{divisores}], source $i$ requires exact division by $4i$; thus, the probability of success for a given divisor $d \equiv 4k+3$ of $p+i$ is $1/i$. 

For a source $i$, the residues progress in steps of $4i$, skipping $i-1$ residues of the form $4k+3$.

For $i \ge 2$, we evaluate consecutive sources across the range $i=[2, \sqrt{p}/2]$. We seek divisors of the form $4k+3$ in a given source, which comprise approximately half of the odd divisors of $(p+c_0)$. This equates to $1/4$ of total divisors, augmented by the contribution of even $(p+c_0)$ sources, ensuring the usable fraction exceeds $1/3$.
\begin{equation*}
\begin{aligned}
    P_{f2}(F) &\ll \prod_{i=2}^{\lfloor\sqrt{p}/2 \rfloor} \bigg(1-\dfrac{1}{i}\bigg)^{\frac{1}{3}\ln{p}} \\
        &= \bigg(\dfrac{1}{\lfloor \sqrt{p}/2 \rfloor} \bigg)^{\frac{1}{3}\ln{p}} \quad \text{(telescoping product).} \\
    &\ll \bigg(\dfrac{2}{\sqrt{p}} \bigg)^{\frac{1}{3}\ln{p}} 
\end{aligned}
\end{equation*}

Restricting the search to the first $M$ sources within a sample of size $N$ yields:
\begin{equation*}
    P_{f2}(F_{(M,N)}) \ll \bigg(\dfrac{1}{M} \bigg)^{\frac{1}{3}\ln{N}}.
\end{equation*}

\subsubsection{Sources of $\mathbf{f0}$}
We group the dispersed sources under the name $f_0$. We explore up to $M$ variations of the ceiling coefficient $c_0$  obtained as $\lceil p/4k \rceil$ (with a maximum of $M=\sqrt{p}/2$). For each source, we have slightly more than $\frac{1}{3}\ln{p}$ candidate divisors of the form $4k+3$. The probability that none of these combinations satisfies the required congruence $4i \mid (p+d)$ is the product of their individual failure probabilities, resulting in the product of an arithmetic progression:

\begin{equation}
\begin{aligned}
P_{f0}(F_M) 
&\ll \prod_{i=1}^{M} \left( 1 - \frac{4i}{p} \right)^{\frac{1}{3}\ln{p}}
= \prod_{i=1}^{M}\left( \frac{p -4i}{p} \right)^{\frac{1}{3}\ln p} \\
&= \bigg[\dfrac{1}{p^M} \dfrac{\Gamma\left(\frac{p}{4}+1\right)}{\Gamma\left(\frac{p}{4}-M+1\right)} \cdot 4^M \bigg]^{\frac{1}{3}\ln p}. 
\end{aligned}
\end{equation}

This was obtained by reversing the direction of the arithmetic progression (from $p-M + 4i$ up to $p$). Now applying Stirling's approximation ($\frac{\Gamma{(x+M)}}{\Gamma(x)} \approx x^M$), 

\begin{equation}
\begin{aligned}
P_{f0}(F_M) 
&\ll \bigg[ \bigg(\dfrac{1}{p}\bigg)^{M}\cdot 4^M \cdot \left(\frac{p-M}{4}\right)^M \bigg]^{\frac{1}{3}\ln p} \\
&= \bigg[ \bigg(\dfrac{p-M}{p}\bigg) \bigg]^{\frac{M}{3}\ln p}.
\end{aligned}
\end{equation}
We see that a significantly large $M$ is required to force this probability of failure away from $1$.

\subsection{Expected Value of failure cases}

\begin{heuristic}[Expected number of failures under the FCT framework] \label{esperanza}

Under the Cramér-type heuristic probabilistic model described in Section~\ref{independencia}, the expected number of failures among $N$ consecutive primes of magnitude $p$ satisfying Mordell-type congruence conditions is given by

\begin{equation}
    E(N) \ll  N  \exp\Big(-\frac{1}{12}(\ln p)^2\Big) . 
\end{equation}
\end{heuristic}

One could derive the expectation by combining the three previous cases covering all sources in the orbit of $p$. However, the dominant contribution arises from the $f2$ sources. Therefore,
\begin{equation*}
\begin{aligned}
    P_{\text{Total Failure}} \ll    \bigg(\dfrac{2}{\sqrt{p}} \bigg)^{\frac{1}{3}\ln{p}} = \exp\bigg(-\frac{1}{6}(\ln p)^2 + \frac{\ln 2}{3}(\ln p)\bigg).\\
\end{aligned}
\end{equation*}

Operating on the exponent term,
\begin{equation*}
\begin{aligned}
  &-\frac{1}{6}(\ln p)^2 + \frac{\ln 2}{3}(\ln p) = -\frac{1}{6}(\ln p)^2 \bigg(1- \frac{2\ln 2}{\ln p}\bigg) .\\
&\text{For sufficiently large } p,  \bigg(1- \frac{2\ln 2}{\ln p}\bigg)  > 0.5.
\end{aligned}
\end{equation*}
Therefore, we have $$ P_{\text{Total Failure}} \ll  \exp\Big(-\frac{1}{12}(\ln p)^2\Big) . $$

Multiplying this by the sample size $N$ yields the expectation shown previously.

For a search restricted to $M$ sources applied to a set of $N$ primes of magnitude $p$, the expected value satisfies the following asymptotic bound:$$E(N) \ll N  \left( \frac{1}{M} \right)^{\frac{1}{3}\ln p}.$$

This suggests that the expected number of failures can be made smaller than $1$ even for relatively small values of $M$, which are computationally accessible. The expected number of failures decays at a super-polynomial rate. While existing theories establish global probability bounds \cite{elsholtz2013}, the FCT algorithm demonstrates the existence of solutions across a wide range of local configurations, yielding a super-polynomially decaying upper bound.

\subsection{Algorithm Complexity and Empirical Acceleration}

The algorithmic implementation of the FCT framework consists of two distinct phases for a given prime $p$:
\begin{enumerate}
    \item \textbf{Initial Sieve (Grid):} Check small prime congruences based on Theorem \ref{grid}.
    \item \textbf{Source Scanning:} Iterate through $M$ distinct sources $i \le M(p)$, computing the divisors of $p+i$ and verifying the condition $4i \mid (p+d)$.
\end{enumerate}

\subsubsection{Theoretical Worst-Case Complexity}
As established in Section \ref{orbitas}, the number of distinct sources in the full orbit $O(p)$ is bounded by $M(p) = \lfloor\sqrt{p}\rfloor$. For each source, the average number of divisors to test is $O(\ln p)$. 
Consequently, a naive scan of the entire theoretical search space yields a worst-case time complexity per prime of:
\begin{equation}
    \mathcal{T}_{\text{theoretical}}(p) = O\big(\sqrt{p} \cdot \ln p \cdot \mathcal{F}(p)\big),
\end{equation}
where $\mathcal{F}(p)$ represents the cost of factoring $p+i$.

\subsubsection{Empirical Complexity and the Truncation Constant}
While the bound $M(p) = \sqrt{p}$ dictates the theoretical maximum, empirical evidence demonstrates a dramatic \emph{early termination} phenomenon. 
In computational stress tests across $10^9$ Mordell-type primes (magnitudes $10^{17}$, $10^{52}$, and $10^{131}$),  a solution was \textbf{always} located within the first $M = 40$ sources. 

This behavior is consistent with the super-polynomial decay of the expected failure probability derived in Section 3.2. Specifically, the probability of failure when truncated to a constant number of sources $M_0$ is bounded by:
$$ P(F_{M_0}) \ll \left( \frac{1}{M_0} \right)^{\frac{1}{3}\ln p}. $$
For $M_0 = 40$ and $p \approx 10^{17}$, this theoretical estimate is vanishingly small ($\approx 10^{-10}$), explaining why the algorithm never requires iterating up to $\sqrt{p}$. 

Therefore, for all practical purposes, the observed runtime is governed not by $p$, but by the \emph{fixed} truncation limit $M_{\text{max}} = 40$ (or a slightly higher chosen constant). The empirical complexity per prime is effectively:
\begin{equation}
    \mathcal{T}_{\text{empirical}}(p) \approx O\big(M_{\text{max}} \cdot \mathcal{F}(p)\big) \approx O(\mathcal{F}(p)),
\end{equation}
with an additional $O(\ln p)$ cost for the initial sieve.

\subsubsection{Performance Justification}
The exceptional throughput reported in Table \ref{tab:empirical_results} ($0.00055$ ms/p at $p \approx 10^{17}$, $0.0015$ ms/p at $p \approx 10^{52}$, and $0.0019$ ms/p at $p \approx 10^{131}$) is a direct consequence of this truncation. Since $M_{\text{max}}$ is constant, the runtime scales primarily with the cost of integer factorization $\mathcal{F}(p)$  rather than with the square root of $p$.

\subsection{Theoretical Comparisons}

Table \ref{tab:compTeorica} illustrates how the FCT framework reduces expected failures compared to classical bounds. We consider two theoretical sample sizes: one of $10^7$ Mordell-type primes, and another of $10^{17}$ or $10^{52}$, matching the magnitude of the respective ranges.

For the Vaughan column, we evaluate the  bound $E_{\text{Vaughan}}(N) \approx N \exp\big( - ((\log N)^{2/3})/c \big)$, with $c=1$, \cite{vaughan1970}, using the sample size $N$ of primes in the given range. Note that Vaughan's bound applies to the full set of numbers, whereas the FCT model is restricted to the harder Mordell subclass; therefore, the comparison is intended to illustrate the relative decay rates rather than provide a direct like-for-like estimate.

\begin{table}[h!]
\centering
\begin{tabular}{@{}lcccc@{}}
\toprule
\textbf{Sources}&\textbf{Range} & \textbf{Sam. Size} & \textbf{E[Fail.] (FCT)} & \textbf{E[Fail.] (Vaughan)} \\ \midrule
$M = 20$&$10^{17}$ & $N=10^7$ & $1.1\times 10^{-10}$ & $4.64 \times 10^{-5}$  \\
$M = 20$&$10^{17}$ & $N=10^{17}$   & $1 $ & $4.64 \times 10^{5}$  \\
$M = 40$&$10^{17}$ & $N=10^{17}$   & $1.2 \times 10^{-4}$ & $4.64 \times 10^{5}$  \\ \midrule
$M = 20$&$10^{52}$ & $N=10^{7}$  & $8.27 \times 10^{-45}$ & $2.15 \times 10^{-28}$  \\
$M = 20$&$10^{52}$ & $N=10^{52}$  & $1.18$ & $2.15 \times 10^{17}$  \\
$M = 40$&$10^{52}$ & $N=10^{52}$  & $1.2\times 10^{-12}$ & $2.15 \times 10^{17}$  \\\bottomrule
\end{tabular}
\caption{Theoretical estimation of unresolved cases 
($p \approx 10^{17}$ and $p \approx 10^{52}$): FCT 
vs.\ Vaughan's bound.}
\label{tab:compTeorica}
\end{table}

The expected value of failure in the FCT model decreases as the number of sources $M$ increases. It also decreases as the magnitude $p$ grows, due to the larger quantity of available divisors.

\section{Probabilistic Interpretation and Borel–Cantelli Lemma}
We emphasize that the probabilistic quantities introduced in this section should be interpreted in a heuristic sense.

The model assumes approximate independence between sources and divisor events, as explained in section \ref{independencia}. 

We model the failure events $F_p$ as random events within a heuristic probabilistic framework. In this framework, $P(F_p)$ represents the modeled probability that a Mordell-type prime $p$ fails to admit an FCT solution.

\subsection{Heuristic finiteness of failure events}

Let $F_p$ denote the event that a Mordell-type prime $p$ 
does not admit an FCT solution. As established in 
Heuristic~\ref{esperanza}, the failure probability satisfies:
$$P(F_p) \ll \exp\!\Big(-\frac{1}{12}(\ln p)^2\Big) = p^{-\frac{1}{12}\ln p}.$$

We consider the cumulative expected number of failures 
up to a bound $X$:
\begin{equation*}
\begin{aligned}
    \sum_{p \le X} P(F_p) &= \sum_{p \le X} \exp\!\Big(-\frac{1}{12}(\ln p)^2\Big) \\
     &= \sum_{p \le X} p^{-\frac{1}{12}\ln p}.
\end{aligned}
\end{equation*}

Since $p^{-\frac{1}{12}\ln p} < p^{-2}$ for sufficiently large $p$, and since the series $\sum_{p} p^{-2}$ converges, it follows that the above series converges. 

\begin{corollary}
Under Heuristic~\ref{esperanza}, the convergence of the series $\sum P(F_p)$, together with the Borel--Cantelli lemma, suggests that the set of Mordell-type primes failing to admit an FCT solution is finite.
\end{corollary}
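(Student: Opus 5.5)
The plan is to treat the statement as a formal consequence of Heuristic~\ref{esperanza} inside the probabilistic model of Section~\ref{independencia}. Concretely, I would interpret each Mordell-type prime $p$ as carrying an event $F_p$ in a common probability space, with $P(F_p)$ bounded by the modeled failure probability. Then I would apply the first Borel--Cantelli lemma, which requires no independence between the events $F_p$. Its conclusion is that, almost surely, only finitely many $F_p$ occur.

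The first step is to make the bound explicit. Heuristic~\ref{esperanza} gives $P(F_p)\le C\exp\!\big(-\tfrac1{12}(\ln p)^2\big)=C\,p^{-\frac1{12}\ln p}$ for all sufficiently large $p$, say $p\ge p_0$. The constant $C$ absorbs the implied constant in $\ll$ and the loss from replacing $(1-2\ln2/\ln p)$ by $1/2$.

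The second step is convergence. For $\ln p\ge 24$, i.e.\ $p\ge e^{24}$, we have $\tfrac1{12}\ln p\ge 2$, hence $p^{-\frac1{12}\ln p}\le p^{-2}$. Therefore
\begin{equation*}
\sum_{p} P(F_p)\le \sum_{p<\max(p_0,e^{24})} 1 \;+\; C\sum_{n\ge 1} n^{-2}<\infty .
\end{equation*}
Here the sum runs over Mordell-type primes, and the finitely many small primes contribute at most $1$ each.

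The third step applies Borel--Cantelli: $\sum P(F_p)<\infty$ implies $P(\limsup F_p)=0$. So in the model, with probability one only finitely many Mordell-type primes fail. A companion remark would handle the non-Mordell primes. Those with $p\equiv 3\pmod 4$ are settled unconditionally by the two-term FCT construction. The remaining residue classes are covered by the classical Mordell identities. So finiteness for Mordell-type primes is the only case that matters.

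The main obstacle is not the summation, which is routine. It is justifying that the deterministic question can be modeled by events $F_p$ to which Borel--Cantelli applies at all. The honest conclusion is therefore only "suggests", as the statement says. I would therefore state explicitly that the argument is conditional on Heuristic~\ref{esperanza}, and in particular on the Cramér-type independence of divisor events across sources. I would also note that a single fixed $p$ either fails or not, so the lemma yields finiteness almost surely in the random model rather than a theorem about the integers.
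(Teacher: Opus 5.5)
Your proposal is correct and follows the paper's argument. Both bound $p^{-\frac{1}{12}\ln p}$ by $p^{-2}$ for large $p$ (you make the threshold $p\ge e^{24}$ and the implied constant explicit), deduce that $\sum P(F_p)$ converges, and apply the first Borel--Cantelli lemma inside the heuristic model. Your extra remarks — that the lemma needs no independence, that the conclusion holds only almost surely in the model, and how non-Mordell primes are handled — go beyond the paper but do not change the argument.
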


\section{Experimental Results and Empirical Probabilities}

The FCT algorithm was stress-tested against $10^9$ Mordell-type primes at $10^{17}$, $10^{52}$, and $10^{131}$ using PARI/GP (v2.17.3) compiled with "gp2c" on a 3.0 GHz processor with $10$ cores. 

The optional parameters used were:

A search depth of sources $M=96$.

Factor limit $2^{24}$.  Divisors are obtained through the factors contained within this limit.

Size of congruences list for the initial sieve $L4k = 1026$.

In the ranges of $10^{52}$ and $10^{131}$ an increase in the factorization limit significantly penalizes performance.

\begin{table}[h!]
\centering
\begin{tabular}{@{}lcccc@{}}
\toprule
\textbf{Range} & \textbf{Sample size} & \textbf{Total time} & \textbf{Time per prime} & \textbf{ms/p out of sieve}\\ \midrule
$10^{17}$  & $10^{9}$ & $9.1$ min & $0.00055$ ms & $0.97$ ms\\
$10^{52}$  & $10^{9}$ & $24.98$ min & $0.0015$ ms & $202$ ms\\
$10^{131}$  & $10^{9}$ & $31.34$ min & $0.0019$ ms & $265$ ms\\
\bottomrule
\end{tabular}
\caption{Empirical results on $10^{17}$, $10^{52}$, and $10^{131}$ ranges}
\label{tab:empirical_results}
\end{table}

\begin{example}
Consider the solution triple $(p, 4k, \text{source})$ for a large prime $p$:
\begin{equation*}
\begin{aligned}
p &= 11756638905368616011414050501310355554617941987811609 \\ 4k &= 1960490455533318809410064185473201382030123301988960 \\ \text{source} &= f2.6
\end{aligned}
\end{equation*} Following Theorem \ref{divisores}, the value $4k$ is derived from a divisor of $p+6$, specifically:
\begin{equation*}d = 6303827831296845046334611528852737562797824122151.
\end{equation*}
By applying the FCT algorithm to the expression $p/4k$, we obtain:
\begin{equation*}FCT(p/4k) = [6, 311, 6303827831296845046334611528852737562797824122151].
\end{equation*}
Finally, by invoking Theorem \ref{sp} and scaling the denominators of the solution by $k$, we yield the three-term representation $[x, y, z]$:

\begin{align*}
x &= 2940735683299978214115096278209802073045184952983440. \\
y &= 5484472049354459369324654558861280866229269937314115600. \\
z &=  10746492911807896894698146950272337341766761173665849 \\
  &\quad 238916016046288627773111370797373460663594211541333400.
\end{align*}
where $z$ is a 108-digit integer.

One can verify directly that $4/p=1/x+1/y+1/z.$
\end{example}

\section{Conclusion}
The FCT framework presents an interesting paradox: by restricting analysis to a smaller subset of potential solutions, it derives formulas that substantially improve upon classical probability bounds. This efficiency stems from exploiting specific algebraic properties that multiply strategic solution locations. 

We highlight three important consequences:
\begin{itemize}
  \item A solution exists whenever $p+1$ has a divisor of the form $4k+3$.
  \item The probability of failure decays super-polynomially, suggesting statistical certainty as $M \to \infty$ 
  \item The probabilistic model suggests that infinitely many failure events should not occur.
\end{itemize}

\vskip20pt\noindent {\bf Acknowledgements.} 
The author acknowledges the ``Sementeira'' group at the University of Santiago de Compostela for sustaining the spirit of mathematical problem solving.

\it{Email addres: } \email{aventas.avp@gmail.com}

\begin{thebibliography}{9}\footnotesize
\bibitem{cramer1936}
H. Cram\'{e}r, \textit{On the order of magnitude of the difference between consecutive prime numbers}, Acta Arithmetica \textbf{2} (1936), no.~1, 23--46.

\bibitem{elsholtz2013} 
C. Elsholtz and T. Tao, \textit{The number of solutions of $4/n = 1/x + 1/y + 1/z$}, Math. Comp. \textbf{82} (2013), 1737--1773. \url{https://arxiv.org/abs/1201.3173} arXiv:1201.3173

\bibitem{Khrushchev} 
S. Khrushchev, \textit{Orthogonal Polynomials and Continued Fractions}, Encyclopedia of Mathematics and its Applications, vol. 122, Cambridge University Press, 2008, p. 159.

\bibitem{moree1999}
P. Moree and J. Cazaran, \textit{On a claim of Ramanujan in his first letter to Hardy}, Expositiones Mathematicae \textbf{17} (1999), 289--311.

\bibitem{Oeis} 
N. J. A. Sloane, \textit{The On-Line Encyclopedia of Integer Sequences}, Primes of the form $x^2 + 840 y^2$, Sequence A139665. \url{https://oeis.org/A139665}


\bibitem{vaughan1970} 
R. C. Vaughan, \textit{On a problem of Erd\H{o}s, Straus and Schinzel}, Mathematika \textbf{17} (1970), 193--198.

\bibitem{ventas2025} 
A. Ventas, \textit{Relaci\'{o}n entre series infinitas, fracci\'{o}ns continuas teito e constantes. Aplicaci\'{o}ns (Parte I)}, 2025. \url{https://retallosdematematicas.blogspot.com/2025/04/relacion-entre-series-infinitas.html}

\end{thebibliography}
\end{document}